\newtheorem{theorem}{Theorem}
\theoremstyle{plain}
\newtheorem{proposition}{Proposition}
\renewcommand\bigskip\medskip
\begin{document}
\title[]{On certain recurrent and automatic sequences in finite fields}
\author{Alain LASJAUNIAS and Jia-Yan YAO}
\date{\today}

\begin{abstract}
In this work we extend our study on a link between automaticity and
certain algebraic power series over finite fields.
Our starting point is a family of sequences in a finite field of
characteristic $2$, recently introduced by the first author in connection with algebraic continued fractions. By including it in a large family of recurrent sequences in an arbitrary finite field, we prove its automaticity. Then we give a criterion on automatic sequences, generalizing a previous result and this allows us to present new families of automatic sequences in an arbitrary finite field.
\end{abstract}

\subjclass{Primary 11J70, 11T55; Secondary 11B85}
\keywords{finite fields, power series over a finite field, continued
fractions, finite automata, automatic sequences}
\maketitle

\section{Introduction}

The present work is a continuation of our article \cite{LY15} in which we
have addressed a question concerning the automaticity of the sequence of leading
coefficients of partial quotients for certain algebraic power series. To know more about the motivation and the history, the reader can consult the
introduction of \cite{LY15} and the references given there.

Let $\mathbb{F}_{q}$ be the finite field containing $q$ elements, with $
q=p^{s}$ where $p$ is a prime number and $s\geqslant 1$ is an integer. We
denote by $\mathbb{F}(q)$ the field of power series in $1/T$, with
coefficients in $\mathbb{F}_{q}$, where $T$ is a formal indeterminate.
Hence, an element in $\mathbb{F}(q)$ can be written as $\alpha =\sum_{k\leqslant
k_{0}}u(k)T^{k}$ with $k_{0}\in \mathbb{Z}$, and $u(k)\in \mathbb{F}_{q}$
for all integers $k\leqslant k_0$. These fields of power series are analogues of the field of real numbers. As in the real case, it is well known that the sequence of
coefficients of this power series $\alpha $, $(u(k))_{k\leqslant k_{0}}$, is
ultimately periodic if and only if $\alpha $ is rational, i.e., $\alpha\in
\mathbb{F}_{q}(T)$. Moreover and remarkably, due to the rigidity of the formal case, this sequence of coefficients, for all the elements in $
\mathbb{F}(q)$ which are algebraic over $\mathbb{F}_{q}(T)$, belongs to a class of particular sequences introduced by computer scientists. The origin of
the following theorem can be found in the work of Christol \cite{C} (see
also the article of Christol, Kamae, Mend\`{e}s France, and Rauzy \cite
{CKMFR}).

\begin{theorem}[Christol]
Let $\alpha $ in $\mathbb{F}(q)$ with $q=p^s$. Let $(u(k))_{k\leqslant
k_{0}} $ be the sequence of digits of $\alpha$ and $v(n)=u(-n)$ for all
integers $n\geqslant 0$. Then $\alpha $ is algebraic over $\mathbb{F}_{q}(T)$
if and only if the following set of subsequences of $(v(n))_{n\geqslant 0}$
\begin{equation*}
K(v)=\left\{ {(v(p^{i}n+j))_{n\geqslant 0}\,|\,\,i\geqslant 0,\,0\leqslant
j<p}^{i}\right\}
\end{equation*}
is finite.
\end{theorem}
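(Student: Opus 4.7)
The statement is Christol's theorem characterizing algebraic power series over $\mathbb{F}_q(T)$ as those whose coefficient sequences are $p$-automatic. My plan is to prove each implication via a finite-dimensionality argument on a suitable space of Laurent series stable under decimation operators.

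For the forward direction, assume $\alpha$ is algebraic over $F := \mathbb{F}_q(T)$. By the classical observation of Ore, algebraicity is equivalent to a nontrivial additive equation
\begin{equation*}
\sum_{i=0}^{n} A_{i}(T)\, \alpha^{p^{i}} = 0, \qquad A_{i} \in \mathbb{F}_q[T],
\end{equation*}
or equivalently to the finite-dimensionality over $F$ of $V := \sum_{i \geq 0} F\, \alpha^{p^{i}}$. For each $j \in \{0, 1, \ldots, p-1\}$ I introduce the decimation operator $\Lambda_j$ sending $f = \sum_{n} c_n T^{-n}$ to $\sum_{n} c_{pn+j}\, T^{-n}$. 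Two identities drive the argument: the decomposition $f = \sum_{j=0}^{p-1} T^{-j}\, \Lambda_j(f)(T^p)$, and the semi-multiplicativity $\Lambda_j\bigl(g(T^p)\, h\bigr) = g(T)\, \Lambda_j(h)$ for $g \in \mathbb{F}_q[T, T^{-1}]$. Combined with the additive equation for $\alpha$, these force the orbit of $\alpha$ under arbitrary compositions of the $\Lambda_j$ to lie in a finitely generated $\mathbb{F}_q[T, T^{-1}]$-submodule of $V$, hence to be finite. Reading off coefficient sequences from this orbit recovers the kernel $K(v)$ exactly, so $K(v)$ is finite.

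For the converse, suppose $K(v)$ is finite, and enumerate its elements as $w_1, \ldots, w_m$; set $\beta_i := \sum_{n \geq 0} w_i(n)\, T^{-n}$, with $\beta_1$ corresponding to $v$. Closure of $K(v)$ under single-step decimations produces indices $\sigma(i, j) \in \{1, \ldots, m\}$ such that
\begin{equation*}
\beta_i \;=\; \sum_{j=0}^{p-1} T^{-j}\, \beta_{\sigma(i,j)}(T^p) \qquad (1 \leq i \leq m).
\end{equation*}
Define $V_n := \sum_{k=1}^{m} F \cdot \beta_k(T^{p^n})$. Substituting $T \mapsto T^{p^n}$ in the system above gives $V_n \subseteq V_{n+1}$ for every $n \geq 0$, while each $V_n$ has $F$-dimension at most $m$; the ascending chain stabilizes, say from index $N$ onwards. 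Then the infinite family $\{\beta_1(T^{p^n})\}_{n \geq 0}$ lies inside the common finite-dimensional $F$-subspace $V_N$, and pigeonholing yields a nontrivial $F$-linear dependence $\sum_{n} C_n(T)\, \beta_1(T^{p^n}) = 0$. Exploiting the perfectness of $\mathbb{F}_q$ through a coefficient-wise Frobenius twist converts this functional relation into an additive algebraic equation for $\beta_1$, and hence for $\alpha$ itself, since $\alpha$ differs from $\beta_1$ only by a polynomial in $T$.

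The main obstacle in both directions is the careful bookkeeping of how the decimation operators interact with multiplication by elements of $\mathbb{F}_q[T, T^{-1}]$ and with the Frobenius substitution $T \mapsto T^{p}$: one must control degrees uniformly so that the modules obtained by iteration are actually finitely generated, and so that the stabilized chain in the converse produces an algebraic equation rather than a mere functional equation in $T^p$. Once those commutation identities are secured, both implications reduce to standard finite-dimensional linear algebra. A secondary technicality is the discrepancy between the sequence $v$ (defined for $n \geq 0$) and the Laurent series $\alpha$ (which may carry a polynomial part); this is dispatched by a preliminary reduction, since adding an element of $\mathbb{F}_q[T]$ to $\alpha$ alters neither algebraicity nor $K(v)$.
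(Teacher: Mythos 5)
The paper does not prove this statement: it is Christol's theorem, quoted as background and attributed to \cite{C} and \cite{CKMFR}, so there is no internal proof to compare yours with. Your sketch follows the standard route (Ore's lemma plus decimation/Cartier operators for the forward direction; closure of the kernel under decimation plus a dimension count for the converse), but two steps do not hold up as written. In the forward direction, the inference ``the orbit lies in a finitely generated $\mathbb{F}_q[T,T^{-1}]$-submodule of $V$, hence is finite'' is a non sequitur: a nonzero finitely generated module over $\mathbb{F}_q[T,T^{-1}]$ is infinite. What the classical argument actually shows is that the orbit stays inside a set of the form $\left\{\sum_i B_i\alpha^{p^i}\right\}$ with the $B_i$ polynomials of uniformly bounded degree; that set is finite because $\mathbb{F}_q$ is finite, and securing the uniform bound requires first using Ore's equation (with $A_0\neq 0$) to re-express the $i=0$ term, since $\Lambda_j(B_0\alpha)$ is not directly controlled by the semi-multiplicativity identity.

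The more serious gap is in the converse. When $s>1$ the dependence $\sum_n C_n(T)\,\beta_1(T^{p^n})=0$ is not an algebraic equation for $\beta_1$, because $\beta_1(T^{p^n})=\bigl(\phi^{-n}\beta_1\bigr)^{p^n}$ where $\phi$ is the coefficientwise Frobenius; the twist you invoke therefore turns the relation into one involving the $s$ distinct series $\phi^{-b}\beta_1$ $(0\leqslant b<s)$ simultaneously, and algebraicity of $\beta_1$ alone does not follow from such a joint relation. The standard repair is to run the pigeonhole only on the subfamily $\beta_1(T^{q^n})=\beta_1(T^{p^{sn}})$, all of which lie in the stabilized space $V_N$ of dimension at most $m$; since the coefficients lie in $\mathbb{F}_q$ one has $\beta_1(T^{q^n})=\beta_1^{q^n}$, so the resulting dependence $\sum_n C_n\beta_1^{q^n}=0$ is genuinely an additive algebraic equation. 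With these two corrections (bounded-degree finite sets rather than finitely generated modules, and $q$-powers rather than $p$-powers in the final dependence) your outline becomes the standard, correct proof.
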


The sequences having the finiteness property stated in this theorem are
called $p$-automatic sequences. A full account on this topic and a very
complete list of references can be found in the book \cite{AS} of Allouche and Shallit.

Concerning algebraic elements in $\mathbb{F}(q)$, a particular subset need to be considered. An irrational element $\alpha $ in $\mathbb{F}(q)$ is called hyperquadratic, if $\alpha ^{r+1}$, $\alpha ^{r}$, $\alpha$, and $1$ are
linked over $\mathbb{F}_{q}(T)$, with $r=p^{t}$ and $t\geqslant 0$ an
integer. The subset of all these elements, noted $\mathcal{H}(q)$, contains the quadratic ($r=1$) and the cubic power series ($r=p$), but also algebraic elements of arbitrary large degree. For
different reasons, $\mathcal{H}(q)$ could be regarded as the analogue of the subset of
quadratic real numbers, particularly when considering the continued fraction algorithm. See \cite{BL} for more information on this notion. An irrational element $\alpha $ in $\mathbb{F}(q)$ can be expanded as
an infinite continued fraction $\alpha =[a_{1},a_{2},\ldots ,a_{n},\ldots ]$,
where the partial quotients $a_{n}$ are polynomials in $\mathbb{F}_{q}[T]$,
all of positive degree, except perhaps for the first one.
The explicit description of continued fractions for algebraic power series
over a finite field goes back to Baum and Sweet \cite{BS1,BS2},
and was carried on ten years later by Mills and Robbins \cite{MR}.
It happens that this continued fraction expansion can be explicitly
given for various elements in $\mathcal{H}(q)$.
This is certainly the case for quadratic power series,
where the sequence of partial quotients is simply
ultimately periodic (as it is for quadratic real numbers).
It was first observed by Mills and Robbins \cite{MR}
that other hyperquadratic elements have also partial
quotients of bounded degrees,
with an explicit continued fraction expansion,
as a famous cubic over $\mathbb{F}_2$
introduced by Baum and Sweet \cite{BS1}. Some of these examples,
belonging to $\mathcal{H}(p)$ with $p\geqslant 5$, are such that
$a_n=\lambda_nT$, for $n\geqslant 1$, with $\lambda_n\in \mathbb{F}_p^*$.
Then Allouche \cite{A} showed that for each example given in \cite{MR},
with $p\geqslant 5$, the corresponding sequence of partial quotients is
automatic. Another case, in $\mathcal{H}(3)$ also given in \cite{MR},
having $a_n=\lambda_nT+\mu_n$, with $\lambda_n,\mu_n\in
\mathbb{F}_3^*$ for $n\geqslant 1$, was treated by Allouche et al. in \cite{ABS}.
Recently we have investigated 
the existence of such hyperquadratic power series, having partial
quotients of degree $1$, in the largest setting with odd characteristic
(see \cite{LY} and particularly the comments in the last section).
However, concerning the cubic power series introduced by Baum and Sweet
in \cite{BS1}, Mkaouar \cite{Mk} showed that the sequence of partial
quotients (which takes only finitely many values) is not automatic (see
also \cite{Y97}). Besides, we know that most of the elements in
$\mathcal{H}(q)$ have partial quotients of unbounded degrees (see the
introduction in \cite{LY15}). Hence, it appears that the link between automaticity and the sequence of partial quotients is not straight.

With each infinite continued fraction in $\mathbb{F}(q)$, we can
associate a sequence in $\mathbb{F}_{q}^{\ast }$ as follows: if $\alpha
=[a_{1},a_{2},\ldots ,a_{n},\ldots ]$ with $a_n\in \mathbb{F}_{q}[T]$, then for all integers $n\geqslant 1$,
we define $u(n)$ as the leading coefficient of the polynomial $a_{n}$.
For several examples in $\mathcal{H}(q)$, we have observed that
this sequence $(u(n))_{n\geqslant 1}$ is automatic. Indeed, a first
observation in this area is the result of Allouche \cite{A} cited above.
Very recently we have described in
\cite{LY15} three other families of hyperquadratic continued fractions
and have shown that the associated
sequences as indicated above are automatic. For an algebraic (even
hyperquadratic) power series, the possibility of describing explicitly
the continued fraction expansion and consequently the sequence
$(u(n))_{n\geqslant 1}$ is sometimes a difficult problem. In this work
we start with such a description given by the first author in \cite{L4} in
characteristic $2$. In the next section, we show that this sequence belongs to a large family of automatic sequences in a finite field . More precisely, we give the explicit algebraic equation satisfied by the generating function attached to each such sequence. In the last section, we generalize an automaticity criterion introduced in our previous work  \cite{LY15} and this allows us, as an application, to present other recurrent and automatic sequences in a finite field, more general than the preceding ones.

\section{A first family of automatic sequences}

The starting-point of the present work is a family of sequences, defined in a finite field of characteristic 2, which are derived from an algebraic continued fraction in power series fields. The proposition stated below is a simplified version of a theorem proved
recently by the first author in \cite{L4}, improving an earlier result \cite[
Proposition 5, p.~556]{LR}. For the effective coefficients of the algebraic equation appearing in this proposition, the reader is refered to \cite{L4}.

\begin{proposition}
\label{thm02}Let $q=2^{s}$ and $r=2^{t}$ with $s,t\geqslant 1$ integers. Let
$\ell \geqslant 1$ be an integer, and $\Lambda _{\ell +2}=(\lambda_{1},
\lambda _{2},\ldots ,\lambda _{\ell},\varepsilon _{1},\varepsilon _{2})\in (
\mathbb{F}_{q}^{\ast })^{\ell +2}$. We define the sequence $(\lambda _{n})_{n\geqslant 1}$
in $\mathbb{F}_{q}^{\ast }$, recursively from the $\ell$-tuple $(\lambda_{1},
\lambda _{2},\ldots ,\lambda _{\ell})$ as follows. For $m\geqslant 0$,
\begin{equation}
\left\{
\begin{array}{ccc}
\lambda _{\ell +rm+1} & = & (\varepsilon _{2}/\varepsilon _{1})\varepsilon
_{2}^{(-1)^{m+1}}\lambda _{m+1}^{r}, \\
\lambda _{\ell +rm+i} & = & (\varepsilon _{1}/\varepsilon _{2})^{(-1)^{i}}
\quad \text{ for }\quad 2\leqslant i\leqslant r.
\end{array}
\right.  \label{eq01}
\end{equation}
Then there exist $(u,v,w,z)\in (\mathbb{F}_{q}[T])^4$, depending on $\Lambda _{\ell +2}$, such that the continued fraction $\alpha =[\lambda _{1}T,\lambda _{2}T,\ldots ,\lambda _{\ell }T,\ldots
,\lambda _{n}T,\ldots ] \in \mathbb{F}(q)$ , satisfies the following algebraic equation 
\begin{equation*}
uX^{r+1}+vX^{r}+wX+z=0.
\end{equation*}
\end{proposition}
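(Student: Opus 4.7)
\medskip

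\noindent The approach I would take follows the classical Mills--Robbins strategy adapted to the characteristic-$2$ Frobenius. Let $\alpha_n$ denote the $n$-th complete quotient of $\alpha$, so that $\alpha=\alpha_1$ and $\alpha_n=\lambda_n T+1/\alpha_{n+1}$, and let $P_n/Q_n$ be the successive convergents of $\alpha$. From the standard identity
\begin{equation*}
\alpha=\frac{P_\ell\,\alpha_{\ell+1}+P_{\ell-1}}{Q_\ell\,\alpha_{\ell+1}+Q_{\ell-1}},
\end{equation*}
it will suffice to prove that the tail $\alpha_{\ell+1}$ is a M\"obius transformation, with coefficients in $\mathbb{F}_q[T]$, of the Frobenius power $\alpha^r$; the required algebraic equation then follows by elimination and clearing denominators.

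The key identity to establish is
\begin{equation*}
\alpha_{\ell+1}=\frac{A(T)\,\alpha^r+B(T)}{C(T)\,\alpha^r+D(T)}
\end{equation*}
for some polynomials $A,B,C,D\in\mathbb{F}_q[T]$ depending only on $\Lambda_{\ell+2}$. The crucial observation is that, because $r$ is a power of the characteristic, raising to the $r$-th power is a continuous ring homomorphism on $\mathbb{F}(q)$, so $\alpha_n^r=\lambda_n^r T^r+1/\alpha_{n+1}^r$ for every $n\geqslant 1$. The recurrence (\ref{eq01}) is then designed precisely to convert one such Frobenius step (at index $m+1$) into a block of $r$ ordinary partial quotients (at indices $\ell+rm+1,\ldots,\ell+rm+r$): the first partial quotient $\lambda_{\ell+rm+1}T$ has leading coefficient proportional to $\lambda_{m+1}^r$, while the remaining $r-1$ entries are fixed constants (depending only on $\varepsilon_1,\varepsilon_2$ and on the parity of their index) times $T$.

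To prove the identity, I would compute, for an arbitrary variable $Y$, the M\"obius transformation of $Y$ given by the finite continued fraction $[\lambda_{\ell+rm+1}T,c_2T,\ldots,c_rT,Y]$, where $c_i=(\varepsilon_1/\varepsilon_2)^{(-1)^i}$. Using (\ref{eq01}), one should verify that this equals an $m$-independent M\"obius transformation $f$ evaluated at $\lambda_{m+1}^rT^r+1/Y'$, where $Y'$ will recursively play the role of $\alpha_{m+2}^r$. A careful bookkeeping of the parity factor $\varepsilon_2^{(-1)^{m+1}}$ is necessary here; the point is that this factor, which at first sight obstructs uniformity in $m$, turns out to be absorbed into the fixed constants $c_i$ and does not propagate between blocks. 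Iterating this block-by-block replacement, or equivalently carrying out an induction built on the recursive definition of the sequence, then yields the claimed M\"obius identity between $\alpha_{\ell+1}$ and $\alpha^r$.

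Substituting this identity into the convergent formula and clearing denominators produces an equation of the form $u\alpha^{r+1}+v\alpha^r+w\alpha+z=0$ with $u,v,w,z\in\mathbb{F}_q[T]$, as required. The main obstacle will be the bookkeeping in the M\"obius computation of the previous paragraph: establishing that the parity-dependent ingredients of (\ref{eq01}) conspire to give a single $m$-independent transformation $f$, so that the block-structure of the recurrence collapses cleanly onto the Frobenius expansion of $\alpha^r$. Once this step is in place, the remainder is routine continued-fraction algebra, and the explicit coefficients from \cite{L4} can in principle be recovered from $A,B,C,D$ together with the convergents $P_{\ell-1},P_\ell,Q_{\ell-1},Q_\ell$.
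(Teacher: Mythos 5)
The first thing to note is that the paper does not prove this proposition at all: it is quoted as a simplified version of a theorem from \cite{L4}, and the reader is referred there for the proof and for the effective coefficients $u,v,w,z$. So there is no internal proof to compare yours against; the relevant comparison is with the strategy of \cite{L4}, which is indeed of the Mills--Robbins type you describe (relating the tail $\alpha_{\ell+1}$ to the Frobenius image $\alpha^{r}$ by a M\"obius transformation over $\mathbb{F}_q[T]$ and then eliminating via the convergent identity). Your skeleton --- $\alpha_n^r=\lambda_n^rT^r+1/\alpha_{n+1}^r$ because Frobenius is a ring homomorphism, the block of $r$ degree-one partial quotients matching one degree-$r$ partial quotient of $\alpha^r$, and the final elimination --- is the right shape of argument.

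However, what you have written is a proof plan rather than a proof, and the step you yourself flag as ``the main obstacle'' is exactly the mathematical content of the proposition. You assert, without computation, that the finite continued fraction $[\lambda_{\ell+rm+1}T,c_2T,\ldots,c_rT,Y]$ collapses to an $m$-independent M\"obius transformation $f$ applied to $\lambda_{m+1}^rT^r+1/Y'$, and that the alternating factor $\varepsilon_2^{(-1)^{m+1}}$ ``is absorbed into the fixed constants $c_i$ and does not propagate between blocks.'' That claim is not obviously true as stated: the first entry of each block genuinely changes with the parity of $m$, so the natural induction produces a transformation depending on the parity of $m$, and one must either show that the two parities yield the same transformation after conjugation, or run a twisted induction in which the alternating factor is part of the inductive data (this is precisely where the explicit coefficients of \cite{L4} come from, via the auxiliary quantities tracking the leading coefficients of the remainders $\alpha-P_n/Q_n$). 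Until this block computation is actually carried out --- including the verification that the resulting $2\times 2$ matrices compose coherently across consecutive blocks --- the claimed M\"obius identity, and hence the algebraic equation, is not established. The remainder (substituting into $\alpha=(P_\ell\alpha_{\ell+1}+P_{\ell-1})/(Q_\ell\alpha_{\ell+1}+Q_{\ell-1})$ and clearing denominators) is indeed routine, but it rests entirely on the unproved step.
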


We shall prove that the sequence $(\lambda _{n})_{n\geqslant 1}$, introduced in this proposition, is $2$
-automatic. Here again, this underlines the existence of a link between automaticity and certain algebraic continued fractions, mentioned in the introduction. Indeed we are going to prove the automaticity, via Christol theorem, for a larger class of sequences in a finite field including these introduced above. We prove the following theorem. 

\begin{theorem}
\label{thm04}Let $\ell \geqslant 1$ be an integer, $p\geqslant 2$ a prime
number, $q=p^{s}$ and $r=p^{t}$ with $s,t\geqslant 1$ integers. Let $
k\geqslant 1$ be an integer dividing $r$. Let $(\lambda_{1},
\lambda _{2},\ldots ,\lambda _{\ell})$ be a given $\ell$-tuple in $(\mathbb{F}_{q})^l$. We define recursively in $\mathbb{F}_{q}$ the sequence $(\lambda _{n})_{n\geqslant 1}$ as follows. For $m\geqslant 0$,
\begin{equation}
\left\{
\begin{array}{ll}
\lambda _{\ell +1+r(km+i)}=\alpha _{i+1}\lambda _{km+i+1}^{r}, \quad \text{ for }\quad 0\leqslant i<k, \\
\lambda _{\ell +1+rm+j}=\beta _{j}, \quad \text{ for }\quad 1\leqslant j<r,
\end{array}
\right.   \label{eq04}
\end{equation}
where $\alpha _{i+1}$ $(0\leqslant i<k)$ in $\mathbb{F}_{q}^*$ and $\beta _{j}$ $(0\leqslant j<r)$ in $\mathbb{F}_{q}$ are fixed elements. Set $\theta =\sum\limits_{n\geqslant
1}\lambda _{n}T^{-n}$. Then there exist $A,B,C$ in $\mathbb{F}_{q}(T)$, with $C\neq 0$, and $\rho $ in $
\mathbb{F}(q)$ such that
\begin{equation*}
\theta =A+\rho \quad \text{and}\quad \rho =B+C\rho ^{r}.
\end{equation*} Hence $\theta $ is algebraic over $\mathbb{F
}_{q}(T)$, and then the sequence $(\lambda _{n})_{n\geqslant 1}$ is $p$
-automatic.
\end{theorem}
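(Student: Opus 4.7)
The plan is to decompose $\theta = A + \rho$, where $A \in \mathbb{F}_q(T)$ collects all residue classes modulo $r$ except the ``recursive'' one, and the remaining subseries $\rho$ is shown to satisfy a Frobenius-linear equation $\rho = B + C\rho^r$. Let $j_0$ denote the residue of $\ell+1$ modulo $r$. For $n \geq \ell + 1$, inspection of the recurrence shows that $\lambda_n$ is given by the $\alpha$-rule exactly when $n \equiv j_0 \pmod{r}$; otherwise $\lambda_n = \beta_{(n-\ell-1) \bmod r}$ depends only on $n \bmod r$. Hence, setting $\theta^{[j]} := \sum_{n \equiv j \pmod{r}} \lambda_n T^{-n}$, the series $\theta^{[j]}$ is eventually periodic (and so rational) for every $j \neq j_0$. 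Put $\rho := \theta^{[j_0]}$ and $A := \sum_{j \neq j_0} \theta^{[j]}$, so $\theta = A + \rho$ with $A \in \mathbb{F}_q(T)$.

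To obtain the equation for $\rho$, I substitute the recurrence and reindex by $N = m + 1$ to write
$$\rho = \rho_{\mathrm{init}} + T^{r-\ell-1}\sum_{N \geq 1}\alpha_{((N-1)\bmod k) + 1}\,\lambda_N^r\,T^{-rN},$$
where $\rho_{\mathrm{init}}$ is the (rational) initial contribution coming from the finitely many $n \leq \ell$ with $n \equiv j_0 \pmod{r}$. I then decompose $\theta = \sum_{i=0}^{k-1}\tilde\theta^{(i)}$ with $\tilde\theta^{(i)} := \sum_{N \equiv i+1 \pmod{k}}\lambda_N T^{-N}$. The hypothesis $k \mid r$ is decisive here: the residue of $j_0$ modulo $k$ picks out a unique $i_0 \in \{0, \dots, k-1\}$, and of the $e = r/k$ residues modulo $r$ lying inside the class $i_0 + 1 \pmod{k}$, exactly one (namely $j_0$) is recursive. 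Hence $\tilde\theta^{(i_0)} = \rho + R_{i_0}$ with $R_{i_0} \in \mathbb{F}_q(T)$, and $\tilde\theta^{(i)} = R_i \in \mathbb{F}_q(T)$ for $i \neq i_0$. By the Frobenius identity (valid because $r$ is a power of $p$), the inner sum equals $\sum_i \alpha_{i+1}(\tilde\theta^{(i)})^r$; expanding $(\rho + R_{i_0})^r = \rho^r + R_{i_0}^r$ using Frobenius once more gives $\sum_i \alpha_{i+1}(\tilde\theta^{(i)})^r = \alpha_{i_0+1}\rho^r + (\text{rational})$. Rearranging produces $\rho = B + C\rho^r$ with $C = \alpha_{i_0+1}T^{r-\ell-1} \neq 0$ (using $\alpha_{i_0+1} \in \mathbb{F}_q^{*}$) and $B \in \mathbb{F}_q(T)$.

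It remains to observe that $\rho$ satisfies the nontrivial polynomial equation $C\rho^r - \rho + B = 0$, so $\rho$, and hence $\theta = A + \rho$, is algebraic over $\mathbb{F}_q(T)$; Christol's theorem then yields the $p$-automaticity of $(\lambda_n)_{n \geq 1}$. The main technical obstacle I expect is the bookkeeping around residues: one must check rigorously that the recursive residue class modulo $r$ sits inside a single residue class modulo $k$ (precisely where $k \mid r$ is used), that each $\tilde\theta^{(i)}$ with $i \neq i_0$ consists only of non-recursive residues modulo $r$, and that the finite initial data is correctly absorbed into $\rho_{\mathrm{init}}$ and the $R_i$. Without the divisibility $k \mid r$, the residue classes modulo $k$ would mix recursive and constant parts, and Frobenius alone could not isolate a clean $\rho^r$ term.
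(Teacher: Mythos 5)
Your proof is correct and follows essentially the same route as the paper: isolate the subseries supported on the residue class of $\ell+1$ modulo $r$ as $\rho$, observe that all other classes contribute rational functions, and use the Frobenius together with $k\mid r$ (so that the $\alpha$-coefficient is constant on that class) to close the equation $\rho=B+C\rho^{r}$ with $C=\alpha_{i_0+1}T^{r-\ell-1}$. The only difference is notational: the paper organizes the same bookkeeping via the index sets $\mathbf{E}$ and $\mathbf{F}$ and a $k$-periodic extension of $(\alpha_i)$ rather than via your residue-class decompositions $\theta^{[j]}$ and $\tilde\theta^{(i)}$.
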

\noindent \textbf{Remark.} \emph{Note that the sequences in $(1)$ correspond, in $(2)$, to the case :$$p=2, k=2, \alpha_1=\varepsilon _{1}^{-1}, \alpha_2=\varepsilon _{2}^2\varepsilon _{1}^{-1}\quad \text{and} \quad \beta_j=(\varepsilon _{2}/\varepsilon _{1})^{(-1)^j}\quad \text{for} \quad j=1,\dots,r-1.$$}
\begin{proof} According to Christol's theorem, the sequence $(\lambda _{n})_{n\geqslant 1}$ is $p$
-automatic if $\theta$ is algebraic. Let us prove that $\theta$ satisfies an algebraic equation of hyperquadratic type. We define two subset of positive integers: $\mathbf{E}=\{\ell +rn+1\mid n\geqslant 0\}$ and $\mathbf{F}=\{\ell +rn+i\mid n\geqslant 0, \quad 2\leq i\leq r\}$. Hence, we have the following partition $\mathbb{N}^*=\{1,\dots,l\}\bigcup\mathbf{F}\bigcup\mathbf{E}$. We define
$$\rho =\sum_{n\in \mathbf{E}}\lambda _{n}T^{-n} \quad \text{and}\quad \rho_1=\sum_{n\in \mathbf{F}}\lambda _{n}T^{-n}.$$ 
Hence, we have $\theta  =\sum_{m=1}^{\ell }\lambda _{m}T^{-m}+\rho_1+\rho$. By the recursive relations $(2)$, we obtain
\begin{eqnarray*}
\rho_1 &=&\sum_{1\leqslant
j<r}\sum_{m\geqslant 0}\lambda _{\ell +1+rm+j}T^{-(\ell +1+rm+j)}=\sum_{1\leqslant
j<r}\sum_{m\geqslant 0}\beta_jT^{-(\ell +1+rm+j)} \\
&=&\sum_{m\geq 0}(\sum_{1\leqslant
j<r}\beta _{j}T^{-\ell -1-j})T^{-rm} \\
&=&(1-T^{-1})^{-r}\sum_{1\leqslant
j<r}\beta _{j}T^{-\ell -1-j}, 
\end{eqnarray*}
since we have $\sum_{m\geq 0}T^{-m}=(1-T^{-1})^{-1}$ in $\mathbb{F}(q)$. Hence we can write 
$$\theta  =\sum_{m=1}^{\ell }\lambda _{m}T^{-m}+(1-T^{-1})^{-r}\sum_{1\leqslant
j<r}\beta _{j}T^{-\ell -1-j}+\rho= A+\rho,$$
with $A\in \mathbb{F}_{q}(T)$.

To simply the notation, we extend the finite sequence $(\alpha _{i})_{1\leqslant i\leqslant k}$ into
a purely periodic sequence of period length $k$, also denoted by $(\alpha _{n})_{n\geqslant
1}$. Similarly from the recursive relations $(2)$, noting that $\mathbf{E}=\{\ell +1+r(km+i)\mid m\geqslant 0, 0\leq i<k\}$ and since $\alpha_{i+1}=\alpha_{km+i+1}$, we obtain
\begin{eqnarray*}
\rho&=&\sum_{n\in \mathbf{E}}\lambda _{n}T^{-n} =\sum\limits_{0\leqslant i<k}\sum\limits_{m\geqslant 0}\lambda _{\ell +1+r(km+i)}T^{-(\ell +1+r(km+i))}\\
&=&\sum\limits_{0\leqslant i<k}\sum\limits_{m\geqslant 0}\alpha
_{km+i+1}\lambda _{km+i+1}^{r}T^{-(\ell +1+r(km+i))}=T^{r-\ell
-1}\sum_{n\geqslant 0}\alpha _{n+1}\lambda _{n+1}^{r}T^{-r(n+1)}.
\end{eqnarray*}
Consequently, and using our partition of $\mathbb{N}^*$, we can write 
\begin{eqnarray*}
T^{\ell+1-r}\rho&=&\sum_{n\geqslant 1}\alpha _{n}\lambda _{n}^{r}T^{-rn}\\
&=&\sum_{m=1}^{\ell }\alpha_m\lambda _{m}^rT^{-rm}+\sum_{n\in \mathbf{E}}\alpha_n\lambda _{n}^rT^{-nr} +\sum_{n\in \mathbf{F}}\alpha_n\lambda _{n}^rT^{-nr}.\quad (3)
\end{eqnarray*}
Since $k$ divides $r$, again by periodicity, we have $\alpha_{\ell+1+rm+j}=\alpha_{\ell+1+j}$. Applying $(2)$, we get
\begin{eqnarray*}
\sum_{n\in \mathbf{F}}\alpha_n\lambda _{n}^rT^{-nr}&=&\sum_{1\leqslant
j<r}\sum_{m\geqslant 0}\alpha_{\ell+1+rm+j}\lambda _{\ell +1+rm+j}^rT^{-r(\ell +1+rm+j)} \\
&=&\sum_{1\leqslant j<r}\sum_{m\geqslant 0}\alpha_{\ell+1+j}\beta_j^rT^{-r(\ell +1+rm+j)}\\
&=&\sum_{m\geq 0}(\sum_{1\leqslant
j<r}\alpha_{\ell+1+j}\beta _{j}^rT^{-r(\ell+1+j)})T^{-mr^2} \\
&=&(1-T^{-1})^{-r^2}\sum_{1\leqslant
j<r}\alpha_{\ell+1+j}\beta _{j}^rT^{-r(\ell+1+j)}.\quad (4) 
\end{eqnarray*}
By periodicity, we also have $\alpha_{\ell+1+r(km+i)}=\alpha_{\ell+1}$. Hence, we obtain 
\begin{eqnarray*}
\sum_{n\in \mathbf{E}}a_n\lambda _{n}^rT^{-nr}&=&\sum\limits_{0\leqslant i<k}\sum\limits_{m\geqslant 0}\alpha_{\ell+1+r(km+i)}\lambda_{\ell+1+r(km+i)}^rT^{-r(\ell+1+r(km+i))}\\
&=&\alpha_{\ell+1}\sum\limits_{0\leqslant i<k}\sum\limits_{m\geqslant 0}\lambda_{\ell+1+r(km+i)}^rT^{-r(\ell+1+r(km+i))}=\alpha_{\ell+1}\rho^r.\quad (5)
\end{eqnarray*}
Combining $(3)$, $(4)$ and $(5)$, we obtain $\rho=B+C\rho^r$ with $B,C$ in $\mathbb{F}_{q}(T)$, where $C=\alpha_{\ell+1}T^{r-\ell-1}$ and 
$$B=T^{r-\ell-1}(\sum_{m=1}^{\ell }\alpha_m\lambda _{m}^rT^{-rm}+(1-T^{-1})^{-r^2}\sum_{1\leqslant j<r}\alpha_{\ell+1+j}\beta _{j}^rT^{-r(\ell+1+j)}).$$
Thus $\rho$ and also $\theta $ are algebraic over $\mathbb{F}_{q}(T)$ and the proof is complete.
\end{proof}

\noindent \textbf{Remark.} \emph{ From a number-theoretic point of view, the sequences described in Proposition 1 are most important because they are associated with an algebraic continued fraction. This association is not relevant for the more general sequences of Theorem 2 as well as for others of a similar type, even more general, considered in the next section. Hence coming back to the sequences $(\lambda _{n})_{n\geqslant 1}$ in a finite field of characteristic 2, defined by $(1)$, a natural question arises: what can be said about the algebraic degree over $\mathbb{F}_{q}(T)$ of the continued fraction $\alpha=[\lambda_1T,\lambda_2T,\dots,\lambda_nT,....]$ ? According to Proposition 1, this degree is in the range $[2,r+1]$ since $\alpha$ is irrationnal and satisfies an algebraic equation of degree $r+1$. It is a classical fact that $\alpha$ is quadratic if and only if the sequence $(\lambda _{n})_{n\geqslant 1}$ is ultimately periodic. Hence  $\alpha$ is quadratic if and only if $\theta$ (the generating function of the sequence introduced in Theorem 2) is rational.}
\newline In the particular and simplest case $r=2$, we are able to give a necessary and sufficient condition to have this rationality. We prove the following.
\begin{proposition} Let $(\lambda _{n})_{n\geqslant 1}$ be the sequence defined in Proposition 1, by $(1)$, assuming that we have $r=2$. Then this sequence is periodic (and purely periodic of period length less or equal to 2) if and only if we have
 $$\lambda _{m}=(\varepsilon_{1}/\varepsilon _{2})\varepsilon _{2}^{((-1)^{l}-(-1)^{m})/2} \quad \text{for}\quad 1\leqslant m\leqslant \ell. $$
\end{proposition}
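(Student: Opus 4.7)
The plan is to compare $(\lambda_n)$ to the explicit two-valued sequence
\[
\mu_n = \frac{\varepsilon_1}{\varepsilon_2}\,\varepsilon_2^{((-1)^\ell-(-1)^n)/2}\qquad(n\geqslant 1),
\]
which depends only on the parity of $n$ and is therefore purely periodic of period dividing $2$. The proposition then amounts to the equivalence: $\lambda_n=\mu_n$ for all $n\geqslant 1$ if and only if $\lambda_m=\mu_m$ for $1\leqslant m\leqslant \ell$.

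For sufficiency I would argue by strong induction on $n$. The base cases $n\leqslant \ell$ are the hypothesis. For $n>\ell$, write $n=\ell+2m+i$ with $i\in\{1,2\}$ and $m\geqslant 0$. If $i=2$, then $(1)$ gives $\lambda_n=\varepsilon_1/\varepsilon_2$, which matches $\mu_n$ because $n\equiv\ell\pmod 2$. If $i=1$, then $m+1\leqslant n-1$, so by induction $\lambda_{m+1}=\mu_{m+1}$; substituting into $(1)$, squaring $\mu_{m+1}$, and invoking the cancellation $(-1)^{m+1}+((-1)^\ell-(-1)^{m+1})=(-1)^\ell$, I obtain $\lambda_n=(\varepsilon_1/\varepsilon_2)\,\varepsilon_2^{(-1)^\ell}=\mu_n$, using $n\equiv\ell+1\pmod 2$.

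For necessity I would assume the sequence is purely periodic of period dividing $2$, so that $\lambda_n$ takes only two values, say $a$ at odd indices and $b$ at even indices. The second line of $(1)$ forces $\lambda_n=\varepsilon_1/\varepsilon_2$ for every $n\equiv\ell\pmod 2$, pinning down $a$ or $b$ depending on the parity of $\ell$. Applying the first line of $(1)$ separately at $m=0$ and $m=1$ produces two equations for the value at the opposite parity class; since squaring is a bijection on $\mathbb{F}_q^\ast$ in characteristic $2$, each equation determines that value uniquely, and a routine check shows the two determinations agree and coincide with $\mu_n$ for $n\equiv\ell+1\pmod 2$. The formula on $\{1,\ldots,\ell\}$ then follows by the $2$-periodicity of $(\lambda_n)$. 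The main friction throughout is the parity bookkeeping of $(-1)^\ell$ and $(-1)^m$ interacting with the squaring in $(1)$; no deeper obstacle is expected.
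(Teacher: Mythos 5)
Your sufficiency direction is correct and is essentially the verification the paper relegates to one sentence at the end of its proof: the explicit $2$-periodic candidate $\mu_n=(\varepsilon_1/\varepsilon_2)\varepsilon_2^{((-1)^\ell-(-1)^n)/2}$ is consistent with both lines of the recursion $(1)$ when $r=2$, and strong induction propagates the hypothesis on $\lambda_1,\dots,\lambda_\ell$ to all $n$. The exponent cancellation you invoke is the right one, and the parity bookkeeping checks out.

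The necessity direction, however, has a genuine gap: you prove only that \emph{pure periodicity of period dividing $2$} forces the stated condition, which is indeed a finite computation from $(1)$. But the content of the proposition --- as the paper proves it and as the remark immediately following it uses it (``if $(\lambda_n)_{n\geqslant 1}$ is not purely $2$-periodic then $\alpha$ is cubic'') --- is that mere \emph{ultimate} periodicity of $(\lambda_n)$, equivalently rationality of $\theta=\sum_n\lambda_nT^{-n}$, equivalently quadraticity of the continued fraction $\alpha$, already forces the condition on $\lambda_1,\dots,\lambda_\ell$. By assuming exact $2$-periodicity from the start you assume away the only hard implication: ruling out, say, an ultimately periodic sequence with longer period or a nontrivial preperiod. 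The recursion $(1)$ is self-referential ($\lambda_{\ell+2m+1}$ depends on $\lambda_{m+1}$), so reducing ``ultimately periodic'' to ``purely $2$-periodic'' combinatorially requires a separate argument about how periods and preperiods interact with the index doubling, and none is given. The paper avoids this entirely by working with the generating series: it derives $\sigma=U+\sigma^2$ with $U=\sum_{m=1}^{\ell}u_mT^{2-2\ell-2m}$, $u_m=\alpha_{\ell+1}\alpha_m\lambda_m^2+1$, solves it as the lacunary series $\sigma=\sum_{m\geqslant 0}U^{2^m}$, and observes that the growing gaps make $\sigma$ (hence $\theta$) irrational unless $U=0$, i.e.\ unless $u_m=0$ for all $m$. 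Some argument of this strength is needed for the ``only if'' direction; your proposal should either supply a combinatorial substitute or restrict the claim to the weaker equivalence it actually establishes.
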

\begin{proof} We will apply Theorem 2, in the particular case $p=2$, $k=2$ and $r=2$. Hence we have $\theta=A+\rho$ and $\rho=B+C\rho^2$. Let $V\in\mathbb{F}_{q}(T)$ be given. We have $V+\rho=V+B+CV^2+C(V+\rho)^2$. Setting $\sigma=C(V+\rho)$, multiplying by $C$ this last equality we obtain
$$\sigma=CV+CB+(CV)^2+\sigma^2=U+\sigma^2.\eqno{(6)}$$
Applying the formulas in Theorem 2, in our particular case, we have $C=\alpha_{\ell+1}T^{1-\ell}$ and 
$$B=T^{1-\ell}(\sum_{m=1}^{\ell }\alpha_m\lambda _{m}^2T^{-2m}+\alpha_{\ell+2}\beta _{1}^2(1+T)^{-4}T^{-2\ell}).$$
 The sequence $(\alpha_m)_{m\geq 1}$ is $2$-periodic. Indeed, we have 
$$\alpha_m=(\varepsilon_{2}/\varepsilon _{1})\varepsilon _{2}^{(-1)^{m}}\quad \text{for } m\geq 1\quad \text{and }\quad \beta_1=\varepsilon_{1}/\varepsilon _{2}.$$
Hence, we get $\alpha_{l+1}\alpha_{l+2}\beta_1^2=1$. Now we choose $V=\alpha_{\ell+1}^{-1}T^{1-\ell}(T+1)^{-2}$. Accordingly, a straightforward computation shows that
$$U=CV+CB+(CV)^2=T^{2-2\ell}\sum_{m=1}^{\ell }(\alpha_{\ell+1}\alpha_m\lambda _{m}^2+1)T^{-2m}.$$
We set $u_m=\alpha_{\ell+1}\alpha_m\lambda _{m}^2+1$ and we have
$$U=u_1T^{-2l}+u_2T^{-2l-2}+\dots+u_lT^{-4l+2}.$$
Note that, for $m\geq 0$, between $U^{2^m}$ and $U^{2^{m+1}}$, we have a gap of length $2^{m+1}$. Consequently $\sum_{m\geq 0}U^{2^m}$ is irrational in $\mathbb{F}(q)$, since it has arbitrarily long blocks of zeros in the $(1/T)$ power series expansion unless $U=0$. By $(6)$, we have $\sigma=\sum_{m\geq 0}U^{2^m}$. We also have $\theta=A+V+C^{-1}\sigma$. Therefore $\theta \in \mathbb{F}_{q}(T)$ if and only if $u_m=0$ for $1\leq m\leq l$ or equivalently if and only if 
$$\lambda _{m}^2=(\alpha_{\ell+1}\alpha_m)^{-1}=(\varepsilon_{1}/\varepsilon _{2})^2\varepsilon _{2}^{(-1)^{l}-(-1)^{m}} \quad \text{for}\quad 1\leqslant m\leqslant \ell. $$
It can be easily verified that the sequence $(\lambda _{n})_{n\geqslant 1}$ is then $2$-periodic : $\varepsilon_{1},\varepsilon_{1}/\varepsilon _{2},\dots$ or $\varepsilon_{1}/\varepsilon _{2},\varepsilon_{1}/\varepsilon _{2}^2,\dots$ according to the parity of $l$.
So the proof is complete.
\end{proof}
\noindent \textbf{Remark.} \emph{The statement $\theta=A+V+C^{-1}\sigma$ and $\sigma=U+\sigma^2$ was given in \cite{L4}, without proof. Moreover we can observe the following: $\theta$ is rational if and only if $\alpha$ is quadratic or if and only if we have $\alpha=[\lambda_1T,\lambda_2T,\dots,\lambda_1T,\lambda_2T,\dots]$. Hence, if $(\lambda _{n})_{n\geqslant 1}$ is not purely $2$-periodic then $\alpha$ is cubic over  $\mathbb{F}_{q}(T)$. Furthermore, if we define $\omega(T)=[T,T,\dots,T,\dots]$ (which is the analogue in the formal case of the golden number $(1+\sqrt{5})/2=[1,1,\dots,1,\dots]$) then $\alpha$ is quadratic if and only if we have $\alpha(T)=(\lambda_1/\lambda_2)^{q/2}\omega((\lambda_1\lambda_2)^{q/2}T)$.}    

\bigskip

Inspired by the form of the sequences presented in Theorem \ref{thm04}, we shall give below a criterion for
automatic sequences.

\section{A criterion for automatic sequences}

In this work, we consider sequences of the form $v=(v(n))_{n\geqslant 1}$.
Let $r\geqslant 2$ be an integer. Equivalently, the sequence $v$ is $r$
-automatic if its $r$-kernel
\begin{equation*}
K_{r}(v)=\left\{ {(v(r^{i}n+j))_{n\geqslant 1}\,|\,\,i\geqslant
0,\,0\leqslant j<r}^{i}\right\}
\end{equation*}
is a finite set (see Cobham \cite[p.~170, Theorem 1]{C1}, see also Eilenberg
\cite[p.~107, Proposition 3.3]{E}). For more details on automatic sequences,
see the book \cite{AS} of Allouche and Shallit. Recall that all ultimately
periodic sequences are $r$-automatic for all integers $r\geqslant 2$, adding
or chopping off a prefix to a sequence does not change its automaticity (see
\cite[p.~165]{AS}), and that a sequence is $r$-automatic if and only if it is
$r^{m}$-automatic for all integers $m\geqslant 1$ (see \cite[Theorem 6.6.4,
p.~187]{AS}).

For all integers $j,n$ ($0\leqslant j<r$, and $n\geqslant 1$), define
\begin{equation*}
(T_{j}v)(n)=v(rn+j).
\end{equation*}
Then for all integers $n,a\geqslant 1$, and $0\leqslant b<r^{a}$ with $r$
-adic expansion
\begin{equation*}
b=\sum_{l=0}^{a-1}b_{l}r^{l}\qquad (0\leqslant b_{l}<r),
\end{equation*}
with the help of the operators $T_{j}$\thinspace $(0\leqslant j<r)$, we
obtain
\begin{equation*}
v(r^{a}n+b)=(T_{b_{a-1}}\circ T_{b_{a-2}}\circ \cdots \circ T_{b_{0}}v)(n).
\end{equation*}
In particular, we obtain that $v$ is $r$-automatic if and only if all $
T_{j}v $\thinspace $(0\leqslant j<r)$ are $r$-automatic, for we have $
K_{r}(v)=\{v\}\cup \bigcup_{j=0}^{r-1}K_{r}(T_{j}v)$.

The following theorem generalizes Theorem 2 in \cite{LY15}, and can be
compared with a result of Allouche and Shallit (see \cite[Theorem 2.2]{as2}).

\begin{theorem}
\label{thm2}Let $r\geqslant 2$ be an integer. Let $v=(v(n))_{n\geqslant 1}$ be a sequence in a finite set $A$, 
and $\sigma $ a bijection on $A$. Fix an integer $i$ with $0\leqslant i<r$. Then, for all integer $m\geqslant 0$, we have the following statement.
\begin{enumerate}
\item[($i_{m}$)] If $(T_{i}v)(n+m)=\sigma (v(n))$ for all integers $
n\geqslant 1$, and $T_{j}v$ is $r$-automatic for all integers $j$ ($
0\leqslant j<r$) with $j\neq i$, then $v$ is $r$-automatic.
\end{enumerate}
\end{theorem}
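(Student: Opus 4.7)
The plan is to proceed by induction on $m$. In the base case $m=0$, the hypothesis reduces to the exact identity $T_i v=\sigma v$, and since each $T_j$ (acting on the indexing variable) commutes with $\sigma$ (acting pointwise), one has $T_i^l v=\sigma^l v$ for every $l\ge 0$. An arbitrary element $T_{b_{a-1}}\circ\cdots\circ T_{b_0}v$ of $K_r(v)$ can then be rewritten, taking $l_0$ to be the first index with $b_{l_0}\neq i$ (or $l_0=a$ if no such index exists), as $\sigma^{l_0}(T_{b_{a-1}}\circ\cdots\circ T_{b_{l_0}}v)$, which lies in $\sigma^{l_0}\cdot K_r(T_{b_{l_0}}v)$ (or in $\{\sigma^{l_0}v\}$ when $l_0=a$). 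Since $\sigma$ has finite order $d$ on the finite set $A$, this yields
\[
K_r(v)\subseteq\bigcup_{0\le k<d}\sigma^k\Bigl(\{v\}\cup\bigcup_{j\neq i}K_r(T_j v)\Bigr),
\]
a finite set.

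For the inductive step $m\ge 1$, set $w=T_i v$, write $m=rq+s$ with $0\le s<r$, and put $j_*=(i+s)\bmod r$. A short calculation using the hypothesis shows that for every $j\in\{0,\ldots,r-1\}$ and all sufficiently large $n$, one has $T_j w(n)=\sigma\,T_{j'}v(n-c_j)$, where $j'=(j-s)\bmod r$ and $c_j\in\{q,q+1\}$. When $j\neq j_*$ (so $j'\neq i$), $T_{j'}v$ is $r$-automatic by hypothesis, and $T_j w$ agrees in all but finitely many positions with a shift of $\sigma T_{j'}v$, so $T_j w$ is $r$-automatic. When $j=j_*$, a minor adjustment of finitely many values of $w$ at small indices (which preserves $r$-automaticity) produces a sequence $\widetilde w$ satisfying the exact relation $T_{j_*}\widetilde w(n+c)=\sigma\widetilde w(n)$ for every $n\ge 1$, with $c=c_{j_*}$.

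An elementary arithmetic check shows $c<m$ in every situation except the single degenerate case $m=1$, $i=r-1$. In the generic case $c<m$, the inductive hypothesis applied to $\widetilde w$ with index $j_*$ and bijection $\sigma$ gives $\widetilde w$ (and hence $w=T_i v$) $r$-automatic. In the degenerate case, one verifies directly that $T_j(T_0 w)$ is $r$-automatic for every $j\neq 1$ and that $T_1(T_0 w)=\sigma\,(T_0 w)$ holds exactly on $\{1,2,\ldots\}$; the base case applied to $T_0 w$ with index $1$ then gives $T_0 w$ $r$-automatic, and the identity $T_0 w(n+1)=\sigma w(n)$ (valid for all $n\ge 1$) recovers $w$ as a left-shift of $\sigma^{-1}(T_0 w)$, so $w$ is $r$-automatic. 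In either case $T_i v$ is $r$-automatic, whence $K_r(v)=\{v\}\cup\bigcup_j K_r(T_j v)$ is finite and $v$ is $r$-automatic. The principal obstacle is precisely this isolated exceptional case $m=1$, $i=r-1$, where the induction on $m$ fails to decrease the parameter and the argument must be redirected from $w$ to its derived sequence $T_0 w$.
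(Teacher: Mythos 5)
Your proof is correct and follows essentially the same strategy as the paper's: induction on $m$, with the base case $m=0$ handled by commuting the operators $T_j$ past $\sigma$ inside the kernel, and the inductive step reducing $(i_m)$ to $\bigl((i+s \bmod r)_c\bigr)$ with $c\in\{[m/r],[m/r]+1\}$ and $c<m$, the single exceptional pair $(m,i)=(1,r-1)$ being resolved by descending once more to $T_0T_iv$ and invoking the base case --- which is exactly the chain of reductions in the paper (there packaged as a separate explicit treatment of $m=1$ before the general step). One minor simplification: since $r(n+c)+j_*-m=rn+i\geqslant 1$ for all $n\geqslant 1$, the relation $T_{j_*}w(n+c)=\sigma(w(n))$ already holds exactly, so the ``minor adjustment'' producing $\widetilde w$ is unnecessary (and is better avoided, as consistently modifying finitely many values of a sequence subject to such a self-referential relation is not entirely innocent).
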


\begin{proof}
 Since $A$ is finite and $\sigma $ is a bijection on $A$,
there exists an integer $l\geqslant 1$ such that $\sigma ^{l}=\mathrm{id}
_{A} $, the identity mapping on $A$. In the following we shall show ($i_{m}$
) by induction on $m$. For this, we need only show that $T_{i}v$ is $r$
-automatic under the conditions of ($i_{m}$).

\smallskip \smallskip If $m=0$, then under the conditions of ($i_{0}$), we
have $T_{i}v=\sigma (v)$, and then
\begin{equation*}
K_{r}(T_{i}v)=\{\sigma ^{a}(T_{i}v)\,|\,0\leqslant a<l\}\cup \bigcup
_{\substack{ 0\leqslant b<l  \\ 0\leqslant j<r,j\neq i}}\sigma
^{b}(K_{r}(T_{j}v)),
\end{equation*}
so $K_{r}(T_{i}v)$ is finite, as $T_{j}v$ is $r$-automatic for all integers $
j$ ($0\leqslant j<r$) with $j\neq i$. \smallskip

If $m=1$, then under the conditions of ($i_{1}$), we have $
(T_{i}v)(n+1)=\sigma (v(n))$ for all integers $n\geqslant 1$, and $T_{j}v$
is $r$-automatic for all integers $j$ ($0\leqslant j<r$) with $j\neq i$.

Below we distinguish two cases:

\smallskip \textbf{Case I:} $0\leqslant i\leqslant r-2$. Then for all
integers $n\geqslant 1$, we have
\begin{equation*}
(T_{0}T_{i}v)(n+1)=(T_{i}v)(rn+r)=\sigma (v(rn+r-1))=\sigma ((T_{r-1}v)(n)),
\end{equation*}
hence $T_{0}(T_{i}v)$ is $r$-automatic, since it is obtained from $\sigma
(T_{r-1}v)$ by adding a letter before, and $T_{r-1}v$ is $r$-automatic by
hypothesis, for $r-1\neq i$ .

Let $j$ be an integer such that $1\leqslant j<r$. Then for all integers $
n\geqslant 1$,
\begin{equation*}
(T_{j}T_{i}v)(n)=(T_{i}v)(rn+j)=\sigma (v(rn+j-1))=\sigma ((T_{j-1}v)(n)).
\end{equation*}
Hence if $j\neq i+1$, then $T_{j}(T_{i}v)$ is $r$-automatic, for $j-1\neq i$
, and thus $T_{j-1}v$ is $k$-automatic by hypothesis. Moreover for $j=i+1$,
we have $T_{i+1}(T_{i}v)=\sigma (T_{i}v)$. Note that $T_{j}(T_{i}v)$ is $r$
-automatic for all integers $j$ ($0\leqslant j<r$) with $j\neq i+1$, hence
we can apply ($(i+1)_{0}$) with $T_{i}v$, and we obtain that $T_{i}v$ is $r$
-automatic.

\smallskip \textbf{Case II:} $i=r-1$. Then for all integers $j,n$ ($
1\leqslant j<r$ and $n\geqslant 1$), we have
\begin{equation*}
(T_{j}T_{r-1}v)(n)=(T_{r-1}v)(rn+j)=\sigma (v(rn+j-1))=\sigma
((T_{j-1}v)(n)).
\end{equation*}
So $T_{j}(T_{r-1}v)$ is $r$-automatic, for $j-1\neq i$, and thus $T_{j-1}v$
is $r$-automatic by hypothesis. Moreover for all integers $n\geqslant 1$, we
have
\begin{equation*}
(T_{0}T_{r-1}v)(n+1)=(T_{r-1}v)(rn+r)=\sigma (v(rn+r-1))=\sigma
((T_{r-1}v)(n).
\end{equation*}
Since $T_{j}(T_{r-1}v)$ is $r$-automatic for all integers $j$ ($1\leqslant
j<r$), we can apply ($0_{1}$) proved above with $T_{r-1}v$, and we obtain
that $T_{r-1}v$ is $r$-automatic.

\smallskip Now let $m\geqslant 1$ be an integer, and assume that ($i_{j}$)
holds for all integers $i,j$\ ($0\leqslant i<r$ and $0\leqslant j\leqslant m$
). We shall show that ($i_{m+1}$) holds for all integers $i$ ($0\leqslant
i<r $). Namely, under the conditions that $(T_{i}v)(n+m+1)=\sigma (v(n))$
for all integers $n\geqslant 1$, and $T_{j}v$ is $r$-automatic for all
integers $j$ ($0\leqslant j<r$) with $j\neq i$, we shall show that $T_{i}v$
is $r$-automatic. For this, we distinguish two cases below.

\smallskip Write $m=r[\frac{m}{r}]+a$, with $0\leqslant a<r$ an integer.

\smallskip \textbf{Case I:} $0\leqslant i<r-a-1$. Let $j$ ($0\leqslant j<r$)
be an integer. If $j<a+1$, then for all integers $n\geqslant 1$, we have
\begin{eqnarray*}
(T_{j}T_{i}v)(n+[\frac{m}{r}]+1) &=&(T_{i}v)(rn+r[\frac{m}{r}
]+k+j)=(T_{i}v)(rn+m+r+j-a) \\
&=&\sigma (v(rn+r+j-a-1))=\sigma ((T_{r+j-a-1}v)(n)),
\end{eqnarray*}
hence $T_{j}(T_{i}v)$ is $r$-automatic, since it is obtained from $\sigma
(T_{r+j-a-1}v)$ by adding a prefix of length $[\frac{m}{r}]+1$, and the
latter is $r$-automatic by hypothesis, for we have $j\geqslant 0>i+a+1-r$.
Now assume $j\geqslant a+1$. Then for all integers $n\geqslant 1$, we have
\begin{eqnarray*}
(T_{j}T_{i}v)(n+[\frac{m}{r}]) &=&(T_{i}v)(rn+r[\frac{m}{r}
]+j)=(T_{i}v)(rn+m+j-a) \\
&=&\sigma (v(rn+j-a-1))=\sigma ((T_{j-a-1}v)(n)).
\end{eqnarray*}
If $j\neq i+a+1$, then $T_{j}(T_{i}v)$ is $r$-automatic, since it is
obtained from $\sigma (T_{j-a-1}v)$ by adding a prefix of length $[\frac{m}{r
}]$, and the latter is $r$-automatic by hypothesis, for we have $j-a-1\neq i$
. If $j=i+a+1$, then $(T_{j}T_{i}v)(n+[\frac{m}{r}])=\sigma ((T_{i}v)(n))$,
for all integers $n\geqslant 1$. Note here that we have $[\frac{m}{r}
]\leqslant m$ and $T_{j}(T_{i}v)$ is $r$-automatic for all integers $j$ ($
0\leqslant j<r$) with $j\neq i+a+1$, hence we can apply ($(i+a+1)_{[\frac{m}{
r}]}$) with $T_{i}v$, and we obtain at once that $T_{i}v$ is $r$-automatic.

\smallskip \textbf{Case II:} $r-a-1\leqslant i<r$. Let $j$ ($0\leqslant j<r$
) be an integer. If $j\geqslant a+1$, then for all integers $n\geqslant 1$,
we have
\begin{eqnarray*}
(T_{j}T_{i}v)(n+[\frac{m}{r}]) &=&(T_{i}v)(rn+r[\frac{m}{r}
]+j)=(T_{i}v)(rn+m+j-a) \\
&=&\sigma (v(rn+j-a-1))=\sigma ((T_{j-a-1}v)(n)),
\end{eqnarray*}
hence $T_{j}(T_{i}v)$ is $r$-automatic, since it is obtained from $\sigma
(T_{j-a-1}v)$ by adding a prefix of length $[\frac{m}{r}]$, and the latter
is $r$-automatic by hypothesis, for we have $i\geqslant r-a-1>j-a-1$. If $
j<a+1$, then for all integers $n\geqslant 1$, we have
\begin{eqnarray*}
(T_{j}T_{i}v)(n+[\frac{m}{r}]+1) &=&(T_{i}v)(rn+r[\frac{m}{r}
]+r+j)=(T_{i}v)(rn+m+r+j-a) \\
&=&\sigma (v(rn+r+j-a-1))=\sigma ((T_{r+j-a-1}v)(n)).
\end{eqnarray*}
If $j\neq i+a+1-r$, then $T_{j}(T_{i}v)$ is $r$-automatic, since it is
obtained from $\sigma (T_{r+j-a-1}v)$ by adding a prefix of length $[\frac{m
}{r}]+1$, and the latter is $r$-automatic by hypothesis, for we have $
r+j-a-1\neq i$. If $j=i+a+1-r$, then $(T_{j}T_{i}v)(n+[\frac{m}{r}
]+1)=\sigma ((T_{i}v)(n))$, for all integers $n\geqslant 1$. Now that $[
\frac{m}{r}]+1\leqslant m$ and $T_{j}(T_{i}v)$ is $r$-automatic for all
integers $j$ ($0\leqslant j<r$) with $j\neq i+a+1$, hence we can apply ($
(i+a+1-r)_{[\frac{m}{r}]+1}$) with $T_{i}v$, \ and we obtain that $T_{i}v$
is $r$-automatic.

\smallskip Finally we conclude that ($i_{m}$) hold for all integers $i,m$\ ($
0\leqslant i<r$ and $m\geqslant 0$).
\end{proof}

As  an application of Theorem~\ref{thm2}, we have the following result
which slightly generalizes Theorem \ref{thm04}.

\begin{theorem}
\label{thm3}Let $\ell \geqslant 1,$ $r\geqslant 2$ and $k\geqslant 1$ be integers such that $
k $ divides $r$. Let $p$ be a prime number and $q=p^s$ where $s\geqslant 1$ is an integer. Let $(u(1),u(2),\dots,u(l))$ be given in $\mathbb{F}_{q}^l$. Let $u=(u(n))_{n\geqslant 1}$ be the sequence in the finite
field $\mathbb{F}_{q}$ such that we have, for all integers $m\geqslant 0$ and $
0\leqslant i<k$,
\begin{equation}
\left\{
\begin{array}{rcl}
u(\ell +1+r(km+i)) & = & \alpha _{i+1}(u(km+i+1))^{\gamma }, \\
u(\ell +1+r(km+i)+j) & = & \beta _{i,j}\text{ }(1\leqslant j<r),
\end{array}
\right.  \label{eq02}
\end{equation}
where $\alpha _{i+1}(0\leqslant i<k)$ in $\mathbb{F}_{q}^*$, $\beta _{i,j} (1\leqslant j<r)$ in $\mathbb{F}_{q}$ are fixed elements and $\gamma \geqslant 1$ is an integer coprime with $q-1$. 
Then the sequence $u$ is $r$-automatic.
\end{theorem}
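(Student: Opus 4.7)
My plan is to reduce the claim to Theorem~\ref{thm2}, applied not directly to $u$ but to the subsequence $w := T_J u$, where $J := (\ell+1)\bmod r$ and $\ell+1 = rQ + J$ with $Q \geq 0$. First, for every $j \in \{0,\dots,r-1\}\setminus\{J\}$, the second line of (\ref{eq02}) shows that $T_j u$ takes only values of the form $\beta_{i,j'}$ with $j'$ fixed and $i$ cycling through $\mathbb{Z}/k\mathbb{Z}$ as $n$ varies; hence $T_j u$ is ultimately periodic (of period dividing $k$) and therefore $r$-automatic. All the difficulty is thus concentrated in $w = T_J u$.

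From the first line of (\ref{eq02}) I obtain, for $n \geq Q$,
\[
    w(n) = \alpha_{((n-Q)\bmod k)+1}\cdot u(n-Q+1)^{\gamma}.
\]
The multiplier depends on $n\bmod k$, but the hypothesis $k\mid r$ is exactly what forces it to be constant along every residue class modulo $r$: for each $c\in\{0,\dots,r-1\}$ one has $rm+c-Q\equiv c-Q\pmod k$, so writing $c-Q+1 = rq_c+j_c$ with $0\le j_c<r$ yields
\[
    (T_c w)(m) = A_c\cdot\bigl((T_{j_c}u)(m+q_c)\bigr)^{\gamma}
\]
for all sufficiently large $m$, with a constant $A_c\in\mathbb{F}_q^{*}$. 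When $j_c\neq J$ the right-hand side is ultimately periodic by the first paragraph, so $T_c w$ is $r$-automatic. There is a unique $c^*$ with $j_{c^*}=J$, namely $c^*\equiv J+Q-1\pmod r$, and for that $c^*$ the identity becomes
\[
    (T_{c^*}w)(n+m') = \sigma(w(n)),
\]
where $\sigma(x):=A_{c^*}x^{\gamma}$ is a bijection of $\mathbb{F}_q$ (because $\gcd(\gamma,q-1)=1$ and $A_{c^*}\neq 0$) and $m':=-q_{c^*}=\lfloor(J+Q-1)/r\rfloor$, which is non-negative since $\ell+1\geq 2$ forces $J+Q-1\geq 0$.

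To finish, I will invoke Theorem~\ref{thm2} on $w$ with $i=c^*$, $m=m'$, and the bijection $\sigma$. Hypothesis (b) on the other $T_c w$ was obtained above; hypothesis (a) holds a priori only for $n$ above some threshold, but the map $n\mapsto r(n+m')+c^*$ is strictly increasing, so each of its orbits contains only finitely many indices below any fixed threshold. Modifying $w$ on a finite set of initial entries along those orbits produces a sequence $w'$ that satisfies (a) for every $n\geq 1$ and still equals $w$ on the tail; since automaticity is invariant under finite modifications, both hypotheses of Theorem~\ref{thm2} hold for $w'$, hence $w'$—and with it $w=T_Ju$—is $r$-automatic. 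Combined with the first paragraph, every $T_j u$ is $r$-automatic, and therefore so is $u$.

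The step I expect to be the main obstacle is the bookkeeping that picks out the unique exceptional residue $c^*$ and checks $q_{c^*}\leq 0$, so that the self-reference on $w$ is oriented in the direction Theorem~\ref{thm2} can absorb; this is precisely where the divisibility hypothesis $k\mid r$ is decisive, for without it the multiplier $\alpha$ would vary within each $T_c$-subsequence of $w$ and no single bijection $\sigma$ would govern the recursion.
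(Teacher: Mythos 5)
Your proof is correct and follows essentially the same route as the paper's: isolate the unique non-ultimately-periodic decimation $T_Ju$ (the paper's $u_b$ with $\ell+1=ra+b$), show that all of its $r$-decimations except one are ultimately periodic while the exceptional one satisfies $(T_{c^*}w)(n+m')=\sigma(w(n))$ with $\sigma(x)=\alpha x^{\gamma}$ bijective because $\gcd(\gamma,q-1)=1$, and then invoke Theorem~\ref{thm2}. (Your closing worry about a threshold is in fact vacuous here, since $q_{c^*}\leqslant 0$ makes the key relation hold for every $n\geqslant 1$ directly.)
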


\begin{proof}
For all integers $i,n\,(0\leqslant i<r,\,n\geqslant 1)$, set $
u_{i}(n)=u(rn+i)$, and we only need to show that all the $u_{i}\,(0\leqslant
i<r)$ are $r$-automatic. \smallskip

Write $\ell +1=ra+b$, with $a,b$ integers such that $a\geqslant 0$, $
0\leqslant b<r$. Then $a+b\geqslant 1$. From the recursive relations (\ref
{eq02}), we deduce at once that all the $u_{j}\,(0\leqslant j<r)$ are
ultimately periodic except for $j=b$, and for all integers $m\geqslant 0$
and $0\leqslant i<k$, we have $u_{b}(km+i+a)=\alpha
_{i+1}(u(km+i+1))^{\gamma }$. Since all the ultimately periodic sequences
are $r$-automatic, there remains for us to show that $u_{b}$ is $r$-automatic.

Extend $(\alpha _{i})_{1\leqslant i\leqslant k}$ to be a periodic sequence
of period $k$, denoted by $(\alpha _{n})_{n\geqslant 0}$. Then for all integers $
n\geqslant 1$, we have
\begin{equation*}
u_{b}(n-1+a)=\alpha _{n}(u(n))^{\gamma },
\end{equation*}
from which, by noting that $k$ divides $r$, we obtain, for all integers $
m\geqslant 1$,
\begin{equation*}
\left\{
\begin{array}{ccl}
u_{b}(rm+a+b-1) & = & \alpha _{rm+b}(u_{b}(m))^{\gamma }=\alpha
_{b}(u_{b}(m))^{\gamma }, \\
u_{b}(rm+a+j-1) & = & \alpha _{rm+j}(u_{j}(m))^{\gamma }=\alpha
_{j}(u_{j}(m))^{\gamma }\text{ }(0\leqslant j<r,\,j\neq b).
\end{array}
\right.
\end{equation*}
Write $a+b-1=rc+d$, with $c,d$ integers such that $c\geqslant 0$, and $
0\leqslant d<r$. Then all the $T_{i}u_{b}\,(0\leqslant i<r)$ are ultimately
periodic (thus $r$-automatic) except for $i=d$, as all the $
u_{j}\,(0\leqslant j<r$, $j\neq b)$ are ultimately periodic. Moreover for
all integers $n\geqslant 1$, we have $(T_{d}u_{b})(m+c)=\alpha
_{b}(u_{b}(m))^{\gamma }=\sigma (u_{b}(m))$, where $\sigma (x)=\alpha
_{b}x^{\gamma }$, and $\sigma $ is bijective on $
\mathbb{F}_{q}$ since $\gamma$ is coprime with $q-1$. To conclude,
it suffices to apply Theorem \ref{thm2} with $u_{b}$ to obtain that $u_{b}$
is also $r$-automatic.
\end{proof}

\noindent \textbf{Remark.} \emph{ In this theorem, if $\gamma$ is a power of $p$ then we are in the case of Theorem 2 and the automaticity follows directly from Christol theorem, as we have seen. In all cases the generating functions of the sequences defined in Theorem 4 are algebraic, due to Christol theorem, but the algebraic equation is not given in the general case and it may not be as simple (hyperquadratic type) as it is in Theorem 2.}

\noindent \medskip

\noindent \textbf{Acknowledgments.} Part of the work was done while Jia-Yan
Yao visited the Institut de Math\'{e}matiques de Jussieu-PRG (CNRS), and he
would like to thank his colleagues, in particular Jean-Paul Allouche, for
their generous hospitality and interesting discussions. He would also like to
thank the National Natural Science Foundation of China (Grants no. 10990012
and 11371210) for partial financial support. \smallskip

\vskip 2 cm
\begin{tabular}{ll}
Alain LASJAUNIAS &  \\
Institut de Math\'{e}matiques de Bordeaux &  \\
CNRS-UMR 5251 &  \\
Talence 33405 &  \\
France &  \\
E-mail: Alain.Lasjaunias@math.u-bordeaux.fr &  \\
&  \\
Jia-Yan YAO &  \\
Department of Mathematics &  \\
Tsinghua University &  \\
Beijing 100084 &  \\
People's Republic of China &  \\
E-mail: jyyao@math.tsinghua.edu.cn &
\end{tabular}

\end{document}